\theoremstyle{plain}
\newtheorem{theorem}{Theorem}[section]
\newtheorem{proposition}[theorem]{Proposition}
\newtheorem{lemma}[theorem]{Lemma}
\newtheorem{question}[theorem]{Question}
\theoremstyle{definition}
\newtheorem{definition}[theorem]{Definition}
\newtheorem{example}[theorem]{Example}
\newtheorem{remark}[theorem]{Remark}
\newcommand{\nc}{\newcommand}
\nc{\on}{\operatorname}
\nc{\Q}{\mathbb{Q}}
\nc{\Z}{\mathbb{Z}}
\nc{\cl}{\mathrm{cl}}
\nc{\fraka}{{\mathfrak a}} \nc{\bba}{{\mathbf a}}
\nc{\frakb}{{\mathfrak b}}
\nc{\frakc}{{\mathfrak c}}
\nc{\frakd}{{\mathfrak d}}
\nc{\frake}{{\mathfrak e}}
\nc{\frakf}{{\mathfrak f}}
\nc{\frakg}{{\mathfrak g}}
\nc{\frakh}{{\mathfrak h}}
\nc{\fraki}{{\mathfrak i}}
\nc{\frakj}{{\mathfrak j}}
\nc{\frakk}{{\mathfrak k}}
\nc{\frakl}{{\mathfrak l}}
\nc{\frakm}{{\mathfrak m}}
\nc{\frakn}{{\mathfrak n}}
\nc{\frako}{{\mathfrak o}}
\nc{\frakp}{{\mathfrak p}}
\nc{\frakq}{{\mathfrak q}}
\nc{\frakr}{{\mathfrak r}}
\nc{\fraks}{{\mathfrak s}}
\nc{\frakt}{{\mathfrak t}}
\nc{\fraku}{{\mathfrak u}}
\nc{\frakv}{{\mathfrak v}}
\nc{\frakw}{{\mathfrak w}}
\nc{\frakx}{{\mathfrak x}}
\nc{\fraky}{{\mathfrak y}}
\nc{\frakz}{{\mathfrak z}}
\nc{\frakA}{{\mathfrak A}}
\nc{\frakB}{{\mathfrak B}}
\nc{\frakC}{{\mathfrak C}}
\nc{\frakD}{{\mathfrak D}}
\nc{\frakE}{{\mathfrak E}}
\nc{\frakF}{{\mathfrak F}}
\nc{\frakG}{{\mathfrak G}}
\nc{\frakH}{{\mathfrak H}}
\nc{\frakI}{{\mathfrak I}}
\nc{\frakJ}{{\mathfrak J}}
\nc{\frakK}{{\mathfrak K}}
\nc{\frakL}{{\mathfrak L}}
\nc{\frakM}{{\mathfrak M}}
\nc{\frakN}{{\mathfrak N}}
\nc{\frakO}{{\mathfrak O}}
\nc{\frakP}{{\mathfrak P}}
\nc{\frakQ}{{\mathfrak Q}}
\nc{\frakR}{{\mathfrak R}}
\nc{\frakS}{{\mathfrak S}}
\nc{\frakT}{{\mathfrak T}}
\nc{\frakU}{{\mathfrak U}}
\nc{\frakV}{{\mathfrak V}}
\nc{\frakW}{{\mathfrak W}}
\nc{\frakX}{{\mathfrak X}}
\nc{\frakY}{{\mathfrak Y}}
\nc{\frakZ}{{\mathfrak Z}}
\nc{\bbA}{{\mathbb A}}
\nc{\bbB}{{\mathbb B}}
\nc{\bbC}{{\mathbb C}}
\nc{\bbD}{{\mathbb D}}
\nc{\bbE}{{\mathbb E}}
\nc{\bbF}{{\mathbb F}} \nc{\bbf}{{\mathbf f}}
\nc{\bbG}{{\mathbb G}}
\nc{\bbH}{{\mathbb H}}
\nc{\bbI}{{\mathbb I}}
\nc{\bbJ}{{\mathbb J}}
\nc{\bbK}{{\mathbb K}}
\nc{\bbL}{{\mathbb L}}
\nc{\bbM}{{\mathbb M}}
\nc{\bbN}{{\mathbb N}}
\nc{\bbO}{{\mathbb O}}
\nc{\bbP}{{\mathbb P}}
\nc{\bbQ}{{\mathbb Q}}
\nc{\bbR}{{\mathbb R}}
\nc{\bbS}{{\mathbb S}}
\nc{\bbT}{{\mathbb T}}
\nc{\bbU}{{\mathbb U}}
\nc{\bbV}{{\mathbb V}}
\nc{\bbW}{{\mathbb W}}
\nc{\bbX}{{\mathbb X}}
\nc{\bbY}{{\mathbb Y}}
\nc{\bbZ}{{\mathbb Z}}
\nc{\calA}{{\mathcal A}}
\nc{\calB}{{\mathcal B}}
\nc{\calC}{{\mathcal C}}
\nc{\calD}{{\mathcal D}}
\nc{\calE}{{\mathcal E}}
\nc{\calF}{{\mathcal F}}
\nc{\calG}{{\mathcal G}}
\nc{\calH}{{\mathcal H}}
\nc{\calI}{{\mathcal I}}
\nc{\calJ}{{\mathcal J}}
\nc{\calK}{{\mathcal K}}
\nc{\calL}{{\mathcal L}}
\nc{\calM}{{\mathcal M}}
\nc{\calN}{{\mathcal N}}
\nc{\calO}{{\mathcal O}}
\nc{\calP}{{\mathcal P}}
\nc{\calQ}{{\mathcal Q}}
\nc{\calR}{{\mathcal R}}
\nc{\calS}{{\mathcal S}}
\nc{\calT}{{\mathcal T}}
\nc{\calU}{{\mathcal U}}
\nc{\calV}{{\mathcal V}}
\nc{\calW}{{\mathcal W}}
\nc{\calX}{{\mathcal X}}
\nc{\calY}{{\mathcal Y}}
\nc{\calZ}{{\mathcal Z}}
\nc{\scrA}{{\mathscr A}}
\nc{\scrB}{{\mathscr B}}
\nc{\scrC}{{\mathscr C}}
\nc{\scrD}{{\mathscr D}}
\nc{\scrE}{{\mathscr E}}
\nc{\scrF}{{\mathscr F}}
\nc{\scrG}{{\mathscr G}}
\nc{\scrH}{{\mathscr H}}
\nc{\scrI}{{\mathscr J}}
\nc{\scrJ}{{\mathscr I}}
\nc{\scrK}{{\mathscr K}}
\nc{\scrL}{{\mathscr L}}
\nc{\scrM}{{\mathscr M}}
\nc{\scrN}{{\mathscr N}}
\nc{\scrO}{{\mathscr O}}
\nc{\scrP}{{\mathscr P}}
\nc{\scrQ}{{\mathscr Q}}
\nc{\scrR}{{\mathscr R}}
\nc{\D}{{\on{D}}}
\nc{\Div}{{\on{Div}}}
\nc{\Perv}{{\on{Perv}}}
\nc{\bnu}{{\bar{ \nu}}}
\nc{\olO}{\bar{\calO}}
\nc{\al}{{\alpha}} 
\nc{\be}{{\beta}}
\nc{\ga}{{\gamma}} \nc{\Ga}{{\Gamma}}
\nc{\hGa}{\hat{\Gamma}}
\nc{\ve}{{\varepsilon}} 
\nc{\la}{{\lambda}} \nc{\La}{{\Lambda}}
\nc{\om}{\omega} \nc{\Om}{\Omega} 
\nc{\sig}{{\sigma}} \nc{\Sig}{{\Sigma}}
\nc{\dR}{{\mathrm{dR}}}
\nc{\Perf}{{\mathrm{Perf}}}
\nc{\Gm}{{\mathbb{G}_m}}
\nc{\colim}{{\on{colim}}}
\nc{\et}{\mathrm{\acute{e}t}}
\DeclareMathAlphabet{\rhomalpha}{LS1}{stixscr}{m}{n}
\nc{\Spa}{\on{{Spa}}}
\nc{\Spd}{\on{{Spd}}}
\nc{\tnb}{\psi_{\rm tame}}
\nc{\oM}{\overline{{M}}}
\nc{\op}{{\on{op}}}
\nc{\ad}{{\on{ad}}}
\nc{\alg}{{\on{alg}}}
\nc{\Ad}{{\on{Ad}}}
\nc{\Adm}{{\on{Adm}}} \nc{\aff}{{\on{af}}}
\nc{\Aut}{{\on{Aut}}}
\nc{\Bun}{{\on{Bun}}}
\nc{\cha}{{\on{char}}}
\nc{\der}{{\on{der}}}
\nc{\Der}{{\on{Der}}}
\nc{\diag}{{\on{diag}}}
\nc{\End}{{\on{End}}}
\nc{\Fl}{{\calF\!\ell}}
\nc{\Tr}{{\on{Transp}}}
\nc{\TR}{{\calT\!\calR}}
\nc{\Gal}{{\on{Gal}}}
\nc{\Gr}{{\on{Gr}}}
\nc{\Hk}{{\on{Hk}}}
\nc{\rH}{{\on{H}}}
\nc{\Hom}{{\on{Hom}}}
\nc{\IC}{{\on{IC}}}
\nc{\id}{{\on{id}}}
\nc{\Id}{{\on{Id}}}
\nc{\ind}{{\on{ind}}}
\nc{\Ind}{{\on{Ind}}}
\nc{\Lie}{{\on{Lie}}}
\nc{\Pic}{{\on{Pic}}}
\nc{\pr}{{\on{pr}}}
\nc{\Res}{{\on{Res}}}
\nc{\res}{{\on{res}}} \nc{\Sat}{{\on{Sat}}}
\nc{\spc}{{\on{sc}}}
\nc{\drv}{{\on{der}}}
\nc{\sgn}{{\on{sgn}}}
\nc{\Spec}{{\on{Spec}}}\nc{\Spf}{\on{Spf}} 
\nc{\Sph}{\on{Sph}}
\nc{\St}{{\on{St}}}
\nc{\tr}{{\on{tr}}}
\nc{\Mod}{{\mathrm{-Mod}}}
\nc{\Hilb}{{\on{Hilb}}} 
\nc{\Ext}{{\on{Ext}}} 
\nc{\vs}{{\on{Vec}}}
\nc{\ev}{{\on{ev}}}
\nc{\nO}{{\breve{\calO}}}
\nc{\tS}{{\tilde{S}}}
\nc{\spe}{{\on{sp}}}
\nc{\loc}{{\on{loc}}}
\nc{\pre}{{\on{pre}}}
\nc{\dimt}{{\on{dim.trg}}}
\nc{\co}{\colon}
\nc{\dia}{{\diamondsuit}}
\nc{\nscrR}{{\mathscr{R}^{\on{nr}}}}
\nc{\GL}{{\on{GL}}}
\nc{\Gl}{\on{Gl}} 
\nc{\GSp}{{\on{GSp}}}
\nc{\gl}{{\frakg\frakl}}
\nc{\SL}{{\on{SL}}} 
\nc{\SU}{{\on{SU}}} 
\nc{\SO}{{\on{SO}}}
\nc{\PGL}{{\on{PGL}}}
\nc{\Conv}{{\on{Conv}}}
\nc{\Rep}{{\on{Rep}}}
\nc{\Dom}{{\on{Dom}}}
\nc{\red}{{\on{red}}}
\nc{\act}{{\on{act}}}
\nc{\nr}{{\on{nr}}}
\nc{\ctf}{{\on{ctf}}}
\nc{\str}{{\on{-}}} 
\nc{\os}{{\bar{s}}}
\nc{\oeta}{{\bar{\eta}}}
\nc{\hookto}{\hookrightarrow}
\nc{\longto}{\longrightarrow}
\nc{\leftto}{\leftarrow}
\nc{\onto}{\twoheadrightarrow}
\nc{\lonto}{\twoheadleftarrow}
\nc{\pot}[1]{ [\hspace{-0,5mm}[ {#1} ]\hspace{-0,5mm}] }
\nc{\rpot}[1]{ (\hspace{-0,7mm}( {#1} )\hspace{-0,7mm}) }
\nc{\smallpot}{{ <\hspace{-1,0mm}<}}
\numberwithin{equation}{section}
\begin{document}
	
	\title{Perfectoid Nullstellensatz: Results and counterexamples.}
	
	\author[I. Gleason]{Ian Gleason}
	\address{Mathematisches Institut der Universit\"at Bonn, Endenicher Allee 60, Bonn, Germany}
	\email{igleason@uni-bonn.de}
	
	\begin{abstract}
		We give necessary conditions and we give sufficient conditions for perfectoid Nullstellensatz to hold. As a consequence, we prove that perfectoid Nullstellensatz does not hold for $\bbC_p$ and other natural $p$-adic fields.  
	\end{abstract}

	\maketitle
	\tableofcontents
	
	\section{Introduction}
	A weak version of Hilbert's Nullstellensatz says that every finite collection \[S:=\{p_1,\dots,p_m\}\subseteq \bbC[x_1,\dots,x_n]\] of polynomial functions in $n$-variables over the complex numbers fall into two cases: either all elements of $S$ have a common solution in $\bbC^n$, or one can find a set of polynomials $q_1,\dots,q_m$ such that \[1=\sum_{i=1}^m q_ip_i.\] 
	
	Recall that for a tuple $\bar{y}\in \bbC^n$ we have an evaluation map 
	\begin{align}
		\on{ev}_{\bar{y}}:\bbC[x_1,\dots,x_n]\to \bbC \\
		\on{ev}_{\bar{y}}(f)=f(\bar{y}).
	\end{align}
	Let $I_{\bar{y}}:=\on{ker}(\on{ev}_{\bar{y}})$, we call ideals obtained in this way \textit{evaluation ideals}.
	One can reformulate the statement to say that every maximal ideal of $\bbC[x_1,\dots,x_n]$ is an evaluation ideal. 
	With this reformulation in mind we make the following definitions.
	Fix a field $C$ and a $C$-algebra $R$. 
	\begin{definition}
		 Let $I \subseteq R$ be an ideal. We say that $I$ is an \textit{evaluation ideal} if the composition map $C\to  R/I$ is an isomorphism. 
	\end{definition}
	\begin{definition}
		We say that $(C,R)$ satisfies the Nullstellensatz theorem, or that $(C,R)$ is a \textit{Nullstellensatz pair} if every maximal ideal of $R$ is an evaluation ideal. 
	\end{definition}
	The following is another reformulation, see \cite[Example 1.4.4, 1.4.5]{Hart}.
	\begin{theorem}
		\label{algebraicnull}
		The pair $(C,C[x_1,\dots,x_n])$ is a Nullstellensatz pair if and only if $C$ is algebraically closed.	
	\end{theorem}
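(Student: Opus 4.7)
The plan is to prove both directions. For the easy direction, suppose $(C, C[x_1, \dots, x_n])$ is a Nullstellensatz pair; it suffices to specialize to $n = 1$. Since $C[x]$ is a PID, every maximal ideal is of the form $(f)$ for some irreducible polynomial $f$. The assumption forces $(f)$ to be an evaluation ideal, which means $C[x]/(f) \cong C$, so $f$ has degree one and is of the form $x - a$. Thus every irreducible polynomial in $C[x]$ is linear, which is the statement that $C$ is algebraically closed.

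For the converse, suppose $C$ is algebraically closed and let $\frakm \subseteq C[x_1, \dots, x_n]$ be a maximal ideal. I want to show the composition $C \to C[x_1, \dots, x_n]/\frakm$ is an isomorphism. The quotient $K := C[x_1, \dots, x_n]/\frakm$ is a field, and by construction it is finitely generated as a $C$-algebra. The main input is Zariski's lemma: a field $K$ that is finitely generated as an algebra over a field $C$ is in fact finite (hence algebraic) over $C$. Granting this, since $C$ is algebraically closed, any algebraic extension is trivial, so $K = C$ and $\frakm$ is an evaluation ideal.

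The main obstacle is Zariski's lemma itself. I would deduce it via Noether normalization: write $K$ as a finite integral extension of a polynomial ring $C[t_1, \dots, t_d] \subseteq K$. Since $K$ is a field and integral over $C[t_1, \dots, t_d]$, the subring $C[t_1, \dots, t_d]$ must also be a field (a standard fact about integral extensions: if $B \supseteq A$ is integral and $B$ is a domain, then $B$ is a field iff $A$ is). A polynomial ring in $d \geq 1$ variables is never a field, forcing $d = 0$, so $K$ is finite over $C$.

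Since the excerpt cites \cite[Example 1.4.4]{Hart} for this result, in the write-up I would likely just invoke that reference rather than reproducing the Noether normalization argument, and concentrate the exposition on the reformulation into the evaluation-ideal language introduced in the paper.
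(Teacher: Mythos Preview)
Your proof is correct, and it matches what the paper does: the paper gives no proof of its own for this statement, merely citing \cite[Example 1.4.4]{Hart}, so your plan to invoke that reference is precisely the paper's approach. The sketch you supply via Zariski's lemma and Noether normalization is the standard argument one would find in that reference; the only minor point worth making explicit is that ``specializing to $n=1$'' in the easy direction is justified because the surjection $C[x_1,\dots,x_n]\to C[x_1]$ (sending $x_i\mapsto 0$ for $i\geq 2$) shows that if $(C,C[x_1,\dots,x_n])$ is a Nullstellensatz pair then so is $(C,C[x_1])$.
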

	Now suppose that $C$ is a complete non-Archimedean analytic field. We denote by $O_C\subseteq C$ its valuation ring, by $|\cdot|:C\to \bbR^+\cup \{0\}$ its norm, by $\varpi\in O_C$ a choice of pseudouniformizer, by $\frakm\subseteq O_C$ the unique maximal ideal and by $k:=O_C/\frakm$ the residue field of $O_C$. 
	Let $T_n=C\langle x_1,\dots,x_n\rangle$ denote the Tate algebra of convergent power series in $n$-variables. 
	The following is the version of the Nullstellensatz in the context of rigid geometry, see \cite[Corollary 11, Corollary 12 \S 2.2, Theorem 4, Corollary 6 \S 3.2]{Bosch}.
	\begin{theorem}
		\label{rigidnull}
		The pair $(C,T_n)$ is a Nullstellensatz pair if and only if $C$ is algebraically closed.	
	\end{theorem}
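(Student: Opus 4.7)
The plan is to prove the two directions separately, deferring the analytic heavy lifting to the Weierstrass machinery for Tate algebras. For the forward implication I would argue by contrapositive: assume $C$ is not algebraically closed and produce a non-evaluation maximal ideal. Let $C'/C$ be a nontrivial finite extension; the non-archimedean absolute value on $C$ extends uniquely to $C'$, making it again a complete non-archimedean field. Pick any $\alpha \in C' \setminus C$ and, after multiplying by a sufficiently negative power of $\varpi$, arrange $|\alpha| \leq 1$. Evaluation at $(\alpha, 0, \ldots, 0)$ defines a continuous $C$-algebra map $\on{ev} \colon T_n \to C'$, which is well defined because the Tate coefficients tend to zero and $|\alpha| \leq 1$. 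Its image contains both $C$ and $\alpha$, hence equals $C[\alpha] = C'$, a field; the kernel is therefore a maximal ideal whose residue ring is $C' \neq C$, so $(C, T_n)$ fails the Nullstellensatz property.

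For the backward implication assume $C$ algebraically closed, and reduce the problem to the following structural input: every proper ideal $I \subsetneq T_n$ admits an integer $0 \leq d \leq n$ and a continuous injection $T_d \hookrightarrow T_n/I$ making $T_n/I$ a finite $T_d$-module (Noether normalization for Tate algebras). Granting this, for a maximal ideal $\mathfrak m$ the ring $T_d$ sits inside the field $T_n/\mathfrak m$ as an integral subring, which forces $T_d$ itself to be a field by elementary commutative algebra. For $d \geq 1$ the ideal $(x_1, \ldots, x_d) \subseteq T_d$ is proper, so $T_d$ is not a field; hence $d = 0$ and $T_n/\mathfrak m$ is a finite extension of $T_0 = C$. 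Algebraic closedness of $C$ then forces $C \to T_n/\mathfrak m$ to be an isomorphism, so $\mathfrak m$ is an evaluation ideal.

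The main obstacle is the Noether normalization theorem for Tate algebras, which is the genuine analytic input. I would prove it by induction on $n$: choose a nonzero $f \in I$, apply a substitution such as $x_i \mapsto x_i + x_n^{c_i}$ for suitably chosen integers $c_i$ so that $f$ becomes distinguished in the variable $x_n$, and invoke the Weierstrass preparation theorem to write a unit multiple of $f$ as a monic polynomial in $x_n$ with coefficients in $T_{n-1}$. This makes $T_n/(f)$ a finite $T_{n-1}$-module, so after quotienting further by the image of $I$ in $T_{n-1}$ the inductive hypothesis produces the desired $T_d$. Rather than reproducing this analytic argument in full, I would cite the standard development in \cite{Bosch}, in which the Weierstrass preparation and division theorems do all of the real work.
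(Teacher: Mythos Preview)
Your argument is correct and is precisely the standard one: the paper itself gives no proof but simply cites \cite[Theorem 4, Corollary 6 \S 3.2]{Bosch}, which proves the backward direction via Noether normalization for Tate algebras exactly as you outline. One trivial quibble: the image of your evaluation map is $C[\alpha]$, which need not equal all of $C'$ unless you chose $C'=C(\alpha)$ to begin with, but since $C[\alpha]$ is already a proper finite field extension of $C$ this does not affect the argument.
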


	Now we fix a prime number $p$ and suppose that $C$ is a perfectoid field of residue characteristic $p$ \cite[Definition 3.6]{Sch17}. Let $R_n=C\langle x_1^{{1}/_{p^\infty}},\dots,x_n^{{1}/_{p^\infty}}\rangle$ denote the perfectoid Tate algebra in $n$-variables. 
	This algebra is obtained from $O_C[x_1^{{1}/_{p^\infty}},\dots x_n^{{1}/_{p^\infty}}]$ by taking its $\varpi$-adic completion and inverting $\varpi$. 
	We are concerned with the following question: 
	\begin{question}
		\label{mainquestion}
		What conditions are sufficient and what conditions are necessary for $(C,R_n)$ to be a Nullstellensatz pair? 		
	\end{question}
	Evidently for $(C,R_n)$ to be a Nullstellensatz pair it is necessary that $C$ be algebraically closed, and one may be tempted by \Cref{algebraicnull} and \Cref{rigidnull} to believe that this condition is also sufficient. 
	Unfortunately, this is not the case. 
	\begin{theorem}
		\label{notype3point}
		Suppose that $n\geq 2$ and that $|C^\times|\subsetneq \bbR^+$ is a proper subset. Then $(C,R_n)$ is \textbf{not} a Nullstellensatz pair. 
	\end{theorem}
	\Cref{notype3point} gives, outside of the case $n=1$, an additional necessary condition for perfectoid Nullstellensatz to hold. Namely, that the value group of its norm map must be as large as possible. 
	The following example shows that perfectoid Nullstellensatz fails for the fields that show up in nature. 
	\begin{example}
Let $\bbC_p$ denote the $p$-adic completion of an algebraic closure of $\bbQ_p$.
The pair $(\bbC_p,\bbC_p\langle x_1^{{1}/_{p^\infty}},\dots,x_n^{{1}/_{p^\infty}}\rangle)$ is a Nullstellensatz pair if and only if $n=1$. 
Indeed, $|\bbC_p^\times|= p^\bbQ\subsetneq \bbR^+$. 
One can argue in an analogous way for the completions of the algebraic closures of fields that are topologically of finite transcendence degree over $\bbQ_p$. 
	\end{example}
	We now give positive results, the first one says that perfectoid Nullstellensatz holds in dimension $1$. 
	\begin{proposition}
		\label{1dimeNullstellensatz}
		The pair $(C,C\langle x^{{1}/_{p^\infty}}\rangle)$ is a Nullstellensatz pair if and only if $C$ is algebraically closed.
	\end{proposition}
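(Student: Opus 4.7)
The plan is to handle the two directions asymmetrically.

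For the \textbf{only-if direction}, if $C$ is not algebraically closed, pick $\alpha\in\bar C\setminus C$; after rescaling by a power of a uniformizer I may assume $|\alpha|\le 1$, so its minimal polynomial $f\in C[x]\subseteq T_1\subseteq R_1$ is not a unit in $R_1$. By Zorn's lemma there is a maximal ideal $\mathfrak{m}\supseteq (f)$, and the composition $C(\alpha) = C[x]/(f) \to R_1/\mathfrak{m}$ is nonzero hence injective (its source is a field), so $R_1/\mathfrak{m}$ is a proper extension of $C$.

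For the \textbf{if direction}, let $\mathfrak{m}\subseteq R_1$ be maximal. Since $R_1$ is a Banach $C$-algebra, $\mathfrak{m}$ is closed and $K := R_1/\mathfrak{m}$ is a Banach $C$-algebra that is a field. The intersection $\mathfrak{m}_0 := \mathfrak{m}\cap T_1$ is a prime of $T_1 = C\langle x\rangle$, which by \Cref{rigidnull} is a principal ideal domain whose maximal ideals are $(x-c)$ for $c\in O_C$; hence $\mathfrak{m}_0 = (0)$ or $\mathfrak{m}_0 = (x-c)$. In the case $\mathfrak{m}_0 = (x-c)$, the image $\xi_n\in K$ of $x^{1/p^n}$ satisfies $\xi_n^{p^n} = c$, so $\xi_n$ is a $p^n$-th root of $c$ and hence lies in $C$ by algebraic closedness; setting $c_n := \xi_n$ one has $c_{n+1}^p = c_n$, and $(x^{1/p^n}-c_n)_n$ generates the evaluation maximal ideal $\mathfrak{m}_{(c_n)} \subseteq \mathfrak{m}$, whence $\mathfrak{m} = \mathfrak{m}_{(c_n)}$ and $R_1/\mathfrak{m} = C$.

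It remains to rule out $\mathfrak{m}_0 = (0)$. Pulling back any point of the nonempty Berkovich spectrum $\calM(K)$ yields a multiplicative seminorm $\nu\in\calM(R_1)$ with $\ker\nu = \mathfrak{m}$. Its restriction $\nu|_{T_1}$ is then a multiplicative norm on $T_1$, i.e.\ a non-classical Berkovich point of $\calM(T_1)$: concretely, the supremum norm on a disk $B(c,r)\subseteq O_C$ of some radius $r>0$. I would then show that the extension of $\nu$ to $R_1$ still has trivial kernel, forcing $\mathfrak{m} = 0$ and contradicting that $R_1$ is not a field. In characteristic $p$ this is essentially automatic: the elements $y_n := x^{1/p^n}-c_n\in R_1$ satisfy $y_n^p = y_{n-1}$ by Frobenius, so the substitution $x\mapsto y_0+c$ identifies $R_1$ with $C\langle (x-c)^{1/p^\infty}\rangle$ and converts $\nu$ into the Gauss norm in $(x-c)$ scaled by $r$, visibly a norm. \textbf{The main obstacle is the characteristic-zero case}, where $y_n^p\ne y_{n-1}$; here I would either expand a general $f\in R_1$ in a perfectoid Taylor series around the point $(c,(c_n))$, using that a perfectoid Tate algebra is locally on $\Spa$ a perfectoid polydisk in \'etale coordinates, so the computation reduces to the Gauss-norm case above, or alternatively pass to the tilt $R_1^\flat = C^\flat\langle x^{1/p^\infty}\rangle$ where the characteristic-$p$ argument applies, and use the tilting equivalence to transfer the conclusion back to $R_1$.
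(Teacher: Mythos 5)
Your overall strategy for the forward direction — restrict the maximal ideal $\mathfrak{m}\subseteq R_1$ to $T_1 = C\langle x\rangle$, use the rigid Nullstellensatz there, and split into the cases $\mathfrak{m}\cap T_1 = (x-c)$ and $\mathfrak{m}\cap T_1 = (0)$ — is a reasonable and genuinely different decomposition from the paper's. Your first case is essentially correct: the compatible $p$-power roots $c_n = \xi_n\in C$ exist by algebraic closedness, the elements $x^{1/p^n}-c_n$ lie in $\mathfrak{m}$, and one can check that the closed ideal they generate is the full evaluation ideal (the quotient of each $C\langle x^{1/p^n}\rangle$ by $(x^{1/p^n}-c_n)$ is $C$, compatibly in $n$, so the completed colimit quotient is $C$). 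The only-if direction is also fine, and your fallback of tilting to characteristic $p$ is exactly the paper's reduction via \Cref{workcharp}.

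The gap is in the case $\mathfrak{m}\cap T_1 = (0)$. You assert that the restricted norm $\nu|_{T_1}$ is ``concretely, the supremum norm on a disk $B(c,r)\subseteq O_C$.'' That identifies it as a Type II or Type III Berkovich point and omits Type IV points entirely, which are precisely the non-classical points with no center in $O_C$. These genuinely occur whenever $C$ is not spherically complete — e.g.\ for $C = \bbC_p^\flat$, which is in the scope of the proposition — and for such points there is no translation $x\mapsto x+c$, $c\in O_C$, bringing $\nu$ into Gauss form, so your ``visibly a norm'' step does not apply. Showing that the unique lift to $R_1$ of a Type IV point of $\calM(T_1)$ still has trivial kernel is exactly the hard content, and nothing in the sketch addresses it; the Gauss-norm computation $|\sum a_m x^m|_\nu = \sup_m|a_m|\,\nu(x)^m$ simply fails for a point with no center.

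This is also where the paper's route diverges and does real work. Instead of restricting $\mathfrak{m}$ to $T_1$, the paper picks a single nonzero $f\in\mathfrak{m}$, looks at its vanishing locus $Z_f\subseteq\Spa R_1$, and proves that $Z_f\to\Spa C$ is quasi-pro-\'etale. The crucial move is that this property may be checked after a v-cover of $\Spa C$ by \cite[Prop.~10.11(v)]{Sch17}, so one is free to enlarge $C$ to a spherically complete field with $|C^\times|=\bbR^+$, over which Types III and IV disappear. Only then does the paper run the Gauss-norm estimate — essentially the same idea as your ``visibly a norm'' step — to exclude Type II points, and then descends the resulting structure $Z_f = \underline{S}\times\Spa C$ back to the original $C$. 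That v-descent step is what lets the argument sidestep Type IV points, and it is precisely what is missing from your proposal.
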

	Our second result says that perfectoid Nullstellensatz holds as long as one works with fields that are large enough and suitably complete. 
	This might be useful in a situation where one is allowed to work v-locally.
	\begin{theorem}
		\label{bigenoughfield}
	Suppose that all of the following conditions hold:
	\begin{enumerate}
		\item $C$ is algebraically closed. \label{condalgebclosed}
		\item $|C^\times|=\bbR^+$. \label{condtype3}
		\item $C$ is spherically complete. \label{condspheric}
		\item $k$ is uncountable. \label{conduncount}
	\end{enumerate}
	then $(C,R_n)$ is a Nullstellensatz pair.
	\end{theorem}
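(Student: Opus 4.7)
The plan is to fix a maximal ideal $\mathfrak{m}\subseteq R_n$ and prove directly that the quotient field $K:=R_n/\mathfrak{m}$ coincides with $C$; once this is done, the images $c_i\in C$ of the coordinates $x_i$ show that $\mathfrak{m}$ contains, and hence equals, the evaluation ideal at $(c_1,\dots,c_n)\in O_C^n$. I first observe that $\mathfrak{m}$ is closed in $R_n$, since the closure of a proper ideal in a unital commutative Banach algebra stays proper, so $K$ is a Banach $C$-algebra that is a field. Its spectral seminorm is then the unique non-archimedean absolute value $|\cdot|_K$ extending $|\cdot|_C$, and by condition \ref{condtype3} one has $|K^\times|=\bbR^+$.

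The heart of the argument is a cardinality bound on the residue field $k_K:=K^\circ/K^{\circ\circ}$, namely $\dim_k k_K\leq \aleph_0$. The algebra $R_n$ carries the countable orthogonal basis $\{x_1^{a_1}\cdots x_n^{a_n}:a_i\in\bbZ_{\geq 0}[1/p]\}$, so it is of countable type over $C$, and this property passes to the quotient $K$. Spherical completeness (condition \ref{condspheric}) lets me invoke Gruson's theorem to produce a countable orthogonal basis of $K$ itself; rescaling each basis element to have norm $1$ via condition \ref{condtype3}, an inspection of the unit ball exhibits $k_K$ as a countable-dimensional $k$-vector space.

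It then remains to argue by contradiction that $K=C$. Suppose $\xi\in K\setminus C$; by condition \ref{condalgebclosed}, $\xi$ is transcendental over $C$. I set $r:=\inf_{c\in C}|\xi-c|_K$. If $r=0$, a minimising sequence is Cauchy by the ultrametric inequality, and completeness of $C$ forces $\xi\in C$, a contradiction. If $r>0$, a nested-ball argument leveraging spherical completeness locates $c_0\in C$ attaining the infimum; after translating we may assume $c_0=0$, so $|\xi|_K=r$. Picking $\varpi\in C$ with $|\varpi|=r$ via condition \ref{condtype3}, the element $\eta:=\xi/\varpi$ lies in $O_K^\times$ with nonzero residue $\zeta\in k_K$. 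Minimality of $r$ rules out $\zeta\in k$, for $\zeta=\bar a$ with $a\in O_C^\times$ would give $|\xi-a\varpi|_K<r$; since $k$ is algebraically closed (inherited from $C$), $\zeta$ must therefore be transcendental over $k$. Then $\{(\zeta-\bar c)^{-1}:\bar c\in k\}\subseteq k_K$ is a $k$-linearly independent family, so $\dim_k k_K\geq|k|$, which is uncountable by condition \ref{conduncount}, contradicting the bound above.

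The main obstacle is the residue-field dimension bound $\dim_k k_K\leq\aleph_0$: this is where spherical completeness enters essentially, through Gruson's orthogonal-basis theorem, since for an arbitrary Banach field extension of $C$ one could a priori worry that the residue field grows uncontrollably. Once that bound is secured, each of the four hypotheses plays a distinct and irreducible role: \ref{condalgebclosed} forces the residual obstruction to be transcendental rather than algebraic, \ref{condtype3} enables the rescaling step and matches value groups, \ref{condspheric} both attains the infimum of distances and controls $\dim_k k_K$, and \ref{conduncount} converts a transcendental residue into the uncountable dimension that closes the contradiction.
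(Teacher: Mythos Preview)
Your argument is correct and lands on the same contradiction as the paper---the residue field $k_K$ of $K=R_n/\frakm$ is simultaneously of countable and of uncountable $k$-dimension---but the two routes to that contradiction are genuinely different. For the countable upper bound, the paper tilts to characteristic $p$, embeds $C(x)$ into a Hahn field $\overline{k(x)}\rpot{\bbR}$, and observes that the supports of the $n$ generators are well-ordered (hence countable) subsets of $\bbR$, so the image of $f^{-1}(O_{C(x)})$ in $k(x)$ sits inside $k[S^{1/p^\infty}]$ for a countable set $S$. You instead invoke Gruson's theorem: $R_n$ is of countable type, hence so is $K$, and over a spherically complete base this forces a countable orthogonal basis, which after rescaling via $|C^\times|=\bbR^+$ yields a countable $k$-basis of $k_K$. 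For the transcendental residue, the paper runs an induction on $n$, projects to $\bbB^1_C$, and uses that under the hypotheses only Type I and Type II Berkovich points occur; you give a direct best-approximation argument, which is really the statement that a spherically complete field admits no proper immediate extensions, so any $\xi\in K\setminus C$ must enlarge the residue field. Your approach is characteristic-free and avoids both the tilting step and the Berkovich point classification; the paper's Hahn-field computation is more hands-on and makes the image inside $k_K$ explicit.

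Two small technical remarks. First, the claim that the spectral seminorm on $K$ is the \emph{unique} absolute value extending $|\cdot|_C$ is neither needed nor obviously true a priori; any point of the Berkovich spectrum $\calM(K)$ gives a bounded multiplicative norm, and one such choice suffices for the whole argument. Second, Gruson's theorem applies to a $C$-Banach space, and the norm for which orthogonality is obtained must match the norm defining $k_K$. The cleanest fix is to pass to the completion $\hat K$ of $K$ under your chosen absolute value: the image of $R_n$ is still dense there, so $\hat K$ is of countable type with multiplicative norm, Gruson applies directly, and $k_{\hat K}=k_K$ while $\xi\in K\subseteq\hat K$, so nothing is lost.
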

	\begin{remark}
		As explained in \Cref{notype3point} conditions \ref{condalgebclosed} and \ref{condtype3} are necessary as long as $n\geq 2$. We suspect that condition \ref{condspheric} is also necessary, but that condition \ref{conduncount} is not. Nevertheless, we do not know this.  	
	\end{remark}
	Let us describe briefly our methods. One can first reduce the proofs of \Cref{notype3point}, \Cref{1dimeNullstellensatz} and \Cref{bigenoughfield} to the case in which $C$ is a characteristic $p$ field by using the tilting equivalence. 
	In characteristic $p$, the algebra $R_n$ represents the unit ball $\bbB^n_C$ in the category of diamonds, and to every maximal ideal of $R_n$ one can associate a unique point in $\bbB^n_C$.
	We use the Berkovich classification of points in $\bbB^1_C$ for our arguments.
	
	For proving \Cref{bigenoughfield} we use the assumptions on $C$ to conclude that the Berkovich unit ball only has Type I points and Type II points.
	Type I points correspond to evaluation ideals and when $x\in \bbB^n_C$ projects to a Type II point in $\bbB^1_C$ then $k(x)=O_{C(x)}/\frakm$ is a transcendental extension of $k$. 
	We exploit the fact that $k(x)$ has uncountable dimension as a $k$-vector space whereas ``the elements coming from $R_n$'' are countably dimensional.

	The proof of \Cref{notype3point} is through an explicit computation.
	We consider a Type III point in $y\in \bbB^1_C$ corresponding to an element $r\in \bbR^+\setminus |C^\times|$. 
	This point induces a (perfected) residue field $C(y)$ which is a nontrivial extension of $C$, and we construct a surjection $R_3\to C(y)$ exhibiting the failure of perfectoid Nullstellensatz, when $n=3$. 
	This construction resembles a division algorithm.
	The cases $n=2$ and $n\geq 4$ can be easily reduced to the case $n=3$.
	As a byproduct of the failure of Nullstellensatz, we can answer negatively a question of Hansen \cite{HAN21}.
	\begin{example}
		\label{example:David}
		Let $y\in \Spa \bbC_p\langle x_1^{\frac{1}{p^\infty}}, x_2^{\frac{1}{p^\infty}}, x_3^{\frac{1}{p^\infty}}\rangle$ correspond to a Type III point.
		The map constructed in the proof of \Cref{mainnotype3point} \[Z:=\Spa C(y)\to \Spa \bbC_p\langle x_1^{\frac{1}{p^\infty}}, x_2^{\frac{1}{p^\infty}}, x_3^{\frac{1}{p^\infty}}\rangle,\] is a Zariski closed immersion with Krull dimension $0$ such that $H^1_{\acute{e}t}(Z,\bbF_\ell)\neq 0$.
		This contradicts the conjectural statement \cite[Conjecture 1.11]{HAN21}.
		Indeed, $H^1_{\acute{e}t}(Z,\bbF_\ell)=H^1_{\acute{e}t}(\Spec\, C(y),\bbF_\ell)$ and we may use the Kummer sequence to identify this set with $C(y)^\times/(C(y)^\times)^\ell$. 
		Now, $C(y)^\times/(C(y)^\times)^\ell\neq 0$ since it admits a surjective map to $|C(y)^\times|/|(C(y)^\times)|^\ell=p^\bbQ\oplus r^{\bbZ[\frac{1}{p}]}/ p^\bbQ\oplus r^{\ell\cdot \bbZ[\frac{1}{p}]}=\bbF_\ell$.
	\end{example}
	\subsection{Acknowledgements}
	This paper was written during stays at Max-Planck-Institut für Mathematik and Universität Bonn, we are thankful for the hospitality of these institutions. The project has received funding by DFG via the Leibniz-Preis of Peter Scholze.

	We would like to thank Peter Scholze, Linus Hamann, and Mingjia Zhang for looking at an early draft and interesting conversations on the work. We thank David Hansen for suggesting \Cref{example:David}.
	We thank the anonymous referees for a very detailed and helpful report. 
	\section{Preparations}
	\subsection{Zariski closed subsets, tilting and points in the unit ball.}
		Let $X=\Spa(R,R^+)$ be an affinoid perfectoid space.	
		\begin{definition}(\cite[Definition 5.7]{Sch17})
		A closed subset $Z\subseteq X$ is said to be a \textit{Zariski closed} set if there is an ideal $I\subseteq R$ such that $Z$ is the vanishing locus of $I$. 
	\end{definition}
	By \cite[Lemma II.2.2]{Sch15}, a Zariski closed subset of $X$ is always represented by an affinoid perfectoid space $Z=\Spa(S,S^+)$.
	Moreover, by \cite[Theorem 5.8]{Sch17} the natural map $R\to S$ is always surjective.

	Recall that tilting induces an equivalence between perfectoid spaces over $X$ and perfectoid spaces over its tilt $X^\flat$ \cite[Corollary 3.20]{Sch17}. 
	By \cite[Lemma II.2.7]{Sch15} and \cite[Theorem 5.8]{Sch17} a map of affinoid perfectoid spaces $Z\to X$ is a Zariski closed immersion if and only if the map of tilts $Z^\flat \to X^\flat$ is. 
	This has the following easy consequence.
	\begin{proposition}
		\label{workcharp}
		$(C,R_n)$ is a Nullstellensatz pair if and only if $(C^\flat, R_n^\flat)$ is a Nullstellensatz pair.
	\end{proposition}
	\begin{proof}
		The tilting equivalence gives a one-to-one correspondence between maximal ideals of $R_n$ and maximal ideals of $R_n^\flat$.
		More precisely, if $\frakm\subseteq R_n$ is a maximal ideal with corresponding residue field $C_\frakm$, then the corresponding ideal $\frakm^\flat\subseteq R_n^\flat$ is the kernel of the surjection $R_n^\flat\to C_\frakm^\flat$. 
		Furthermore, $\frakm$ is an evaluation ideal if and only if $\frakm^\flat$ is an evaluation ideal. 
		Indeed, the map $C\to C_\frakm$ is an isomorphism if and only if $C^\flat\to C_\frakm^\flat$ is . 
	\end{proof}
	We will use \Cref{workcharp} to reduce the proof of the main theorems to the case where $C$ is a characteristic $p$ field. 
	In characteristic $p$, the algebra $R_n$ represents $\bbB_C^n$, the unit ball in $n$-variables.
	Note that $\Spa C\langle x_1,\dots,x_n\rangle\cong \Spa C\langle x_1^{{1}/_{p^\infty}},\dots x^{{1}/_{p^\infty}}_n\rangle$ as topological spaces. For this reason, the notion of closed subset agrees for both adic spaces, but the notion of Zariski closed subset in one and the other are very different. 

	\subsection{Spherically complete fields}
	Recall that a field extension of non-archimedean valued fields $F$ over $C$ is said to be an \textit{immediate extension} \cite[Definition 6.9]{Ultramet} if $|F^\times|=|C^\times|$ and the map of residue fields $O_C/\frakm\to O_F/\frakm_F$ is an isomorphism.
	A non-archimedean valued field is said to be \textit{maximally complete} if it has no proper immediate extensions. 
	By \cite[Theorem 6.12]{Ultramet} maximally complete fields coincide with spherically complete fields, and by \cite[Theorem 6.13]{Ultramet} any non-archimedean valued field admits a maximally complete immediate extension, which we will refer to as a \textit{spherical completion}.

	In general, a spherical completion of a field $C$ might be hard to describe and two different spherical completions might not even be isomorphic \cite[Remark 6.21]{Ultramet}.
	Nevertheless, every non-archimedean field admits a valuation preserving injection into a field of a very simple form.

	Let $\Gamma\subseteq \bbR$ be an ordered abelian subgroup, and let $k$ be field. 
	We let $k\rpot{\Gamma}$ denote the set of pairs $\{(\calP,g)\}$ where $\calP\subseteq \Gamma$ is a well-ordered subset and $g:\calP\to k^\times$ is a function. 
	Alternatively, one can think of $k\rpot{\Gamma}$ as the set of functions $g:\bbR\to k$ whose support is well-ordered and contained in $\Gamma$.
	The addition rule is given by point-wise addition $[g_1+g_2](\gamma)=g_1(\gamma)+g_2(\gamma)$ and multiplication is given by convolution $[g_1\cdot g_2](\gamma)=\sum \limits_{\gamma_1\in \Gamma} g_1(\gamma_1)g_2(\gamma-\gamma_1)$. 
	We can endow this ring with a valuation by letting $|g|_{k\rpot{\Gamma}}=e^{-\gamma_g}\in \bbR$ where $\gamma_g$ is the smallest element in the support of $g$ and $|g|_{k\rpot{\Gamma}}=0$ when $g=0$. 
	\begin{theorem}(\cite[Theorem 3.16]{Ultramet})
		For all $k$ and $\Gamma\subseteq \bbR$, $k\rpot{\Gamma}$ is a complete non-archimedean valued field with residue field isomorphic to $k$ and value group $\Gamma$.
	\end{theorem}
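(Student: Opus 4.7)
The plan is to verify the ring and field axioms by exploiting the well-ordering inherited from $\bbR$, and then to read off the topological data from the explicit description of the valuation. The first step is to check that addition and multiplication are well-defined maps into $k\rpot{\Gamma}$. For $g_1+g_2$ this is immediate since a finite union of well-ordered subsets of $\bbR$ is well-ordered. For the convolution product, the crucial input is the following consequence of Neumann's lemma: if $S,T \subseteq \Gamma$ are well-ordered, then $S+T$ is well-ordered and each $\gamma \in S+T$ admits only finitely many representations $\gamma = s+t$ with $s\in S$, $t\in T$. This makes the convolution a finite sum at each point and shows its support is a well-ordered subset of $\Gamma$. The ring axioms then follow formally.

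Next, I would verify that the prescription $|g|_{k\rpot{\Gamma}} = e^{-\gamma_g}$ defines a multiplicative non-archimedean absolute value. Multiplicativity reduces to the observation that the minimum of $\on{supp}(g_1 \cdot g_2)$ is the sum of the individual minima, with coefficient there equal to the nonzero product of the two leading coefficients. The non-archimedean inequality is transparent from the description of the support of a sum.

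The main substance is showing that every nonzero element is invertible. Given $g \neq 0$ with minimal support element $\gamma_0$ and leading coefficient $c := g(\gamma_0) \in k^\times$, I would factor $g = c \cdot t^{\gamma_0} \cdot (1+h)$, where $t^{\gamma_0}$ denotes the indicator of $\{\gamma_0\}$ and $h \in k\rpot{\Gamma}$ has support strictly inside $\Gamma \cap \bbR_{>0}$. The factor $c \cdot t^{\gamma_0}$ is trivially invertible, so the main obstacle is to make sense of $(1+h)^{-1} = \sum_{n \geq 0}(-h)^n$ as a bona fide element of $k\rpot{\Gamma}$. One needs that $\bigcup_n \on{supp}(h^n)$ is well-ordered in $\Gamma$ and that for each fixed $\gamma$ only finitely many $n$ contribute. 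Both follow from Neumann's lemma: since $\on{supp}(h)$ is a well-ordered set of positive reals bounded below by some $\epsilon > 0$, the set of all finite sums from $\on{supp}(h)$ is well-ordered, and an $n$-fold sum lies in $[n\epsilon, \infty)$, so at any fixed $\gamma$ only the terms with $n \leq \gamma/\epsilon$ can appear.

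Finally, for completeness I would take a Cauchy sequence $(g_n)$, observe that the condition $\gamma_{g_m - g_n}\to\infty$ forces $g_n(\gamma)$ to be eventually constant for each fixed $\gamma \in \bbR$, and define the candidate limit $g$ pointwise. The intersection of $\on{supp}(g)$ with $(-\infty,N)$ agrees with that of some $g_n$ by the Cauchy condition, hence is well-ordered; this is enough to conclude that $\on{supp}(g)$ itself is well-ordered and contained in $\Gamma$. The value group is then visibly $\{e^{-\gamma} : \gamma \in \Gamma\} \cup \{0\}$. For the residue field, the valuation ring consists of $g$ with $\gamma_g \geq 0$ and the maximal ideal of those with $\gamma_g > 0$, so the reduction map $g \mapsto g(0)$ provides an inverse to the obvious embedding $k \hookrightarrow k\rpot{\Gamma}$ sending $c$ to the indicator function of $\{0\}$.
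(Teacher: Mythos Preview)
The paper does not supply a proof of this theorem: it is stated with a bare citation to \cite[Theorem 3.16]{Ultramet} and no proof environment follows. So there is nothing in the paper to compare your argument against.

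That said, your sketch is the standard proof of this classical fact about Hahn series fields and is correct. The only place I would tighten the wording is in the invertibility step: when you say ``$\on{supp}(h)$ is a well-ordered set of positive reals bounded below by some $\epsilon>0$'', this is true precisely because a nonempty well-ordered subset of $\bbR_{>0}$ has a minimum; you might make explicit that the case $h=0$ is trivial. Everything else (Neumann's lemma for well-definedness of the product and of the geometric series, the pointwise construction of the Cauchy limit, and the identification of the residue field via $g\mapsto g(0)$) is exactly as in the standard references.
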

	\begin{definition}
		Any field of the form $k\rpot{\Gamma}$ is called a \textit{Hahn field}.	
	\end{definition}
	\begin{theorem}(\cite[Theorem 7.3, Corollary 7.4]{Ultramet})
		\label{embeddonHahn}
		Let $C$ be a complete non-Archimedean field, let $k$ be its residue field and let $\Gamma=\on{log}(|C^\times|)\subseteq \bbR$. 
		Suppose that $\on{char}(C)=\on{char}(k)$, that $k$ is algebraically closed, and that $\Gamma$ is divisible. 
		The following hold:
		\begin{enumerate}
			\item If $C$ is spherically complete, then $C$ is isomorphic to $k\rpot{\Gamma}$. 
			\item There is an immediate extension of $C$ that is isomorphic to $k\rpot{\Gamma}$.  
		\end{enumerate}
	\end{theorem}
	A consequence of \Cref{embeddonHahn} is that every non-Archimedean field $C$ of equal characteristic may be embedded into a Hahn field $\frakC$ in such a way that the residue field of $\frakC$ is $k$ again.
	Indeed, immediate extensions, by definition, preserve the residue field.
	We will use this observation in the proof of \Cref{mainnotype3point}. 
	\begin{remark}
	A mixed characteristic analogue of \Cref{embeddonHahn} can be found in \cite[Corollary 7.17]{Ultramet}. 
	The role of Hahn fields is taken by the so called $p$-adic Mal'cev--Neumann fields.
	We will not need to study these fields since all of our arguments reduce by tilting to the equal characteristic case.
	\end{remark}
	\section{Results}
	In this section we give the proof of \Cref{bigenoughfield}. 

	\begin{theorem}
		\label{maintextbigenoughfield}
	Suppose that all of the following conditions hold:
	\begin{enumerate}
		\item $C$ is algebraically closed. \label{maincondalgebclosed}
		\item $|C^\times|=\bbR^+$. \label{maincondtype3}
		\item $k$ is uncountable. \label{mainconduncount}
		\item $C$ is spherically complete. \label{maincondspheric}
	\end{enumerate}
	then $(C,R_n)$ is a Nullstellensatz pair.
	\end{theorem}
	\begin{proof}
		By \Cref{workcharp}, it suffices to show that $(C^\flat,R_n^\flat)$ is a Nullstellensatz pair. 
		Now, we claim that $C^\flat$ also satisfies the conditions above.
		Indeed, that the first three conditions follow from the tilting compatibilities: $|C^\flat|=|C|$, $k=O_C/C^{\circ \circ}=O_{C^\flat}/C^{\flat,\circ\circ}$ and $(\Spec\, C)_{\acute{e}t}\cong  (\Spec\, C^\flat)_{\acute{e}t}$.
		For the last condition, let $K^\flat/C^\flat$ be an immediate extension which we may assume to be algebraically closed.
		By the tilting equivalence, there is an algebraically closed field extension $F/C$ with $F^\flat=K$.
		Now, $|F|=|K|=\bbR$ and the residue field of $F$ is also $k$ (since this holds for $K=F^\flat$), this implies that $F$ is an immediate extension of $C$ and since $C$ is spherically complete $C\cong F$ which implies $C^\flat\cong K$ as we wanted to show.

		By the above, we may assume without loss of generality that $C$ is of characteristic $p$.
		Let $x\in \bbB^n_C$ denote the point associated to a maximal ideal $I\subseteq R_n$, let $C(x)=R_n/I$ be the residue field at $x$, and assume for the sake of contradiction that $C(x)\neq C$. 
		By induction, we may also choose $n$ to be minimal for which such an ideal $I$ exists. 
		Let $x_1$ be the image of $x$ under the first projection map $\pi_1:\bbB_C^n\to \bbB_C^1$. 
		Recall the Berkovich classification of points in the unit ball \cite[\S 1.4.4]{Ber90}.
		Since $|C^\times|=\bbR^+$ and $C$ is spherically complete, $\bbB^1_C$ does not have Type III or Type IV points. 
		By minimality of $n$, $x_1$ is also not a Type I point, so $x_1$ must be a Type II point. 
		Indeed, if $x_1$ was a Type I point then then the residue field at $x_1$ must be $C$ since $C$ is algebraically closed. 
		Moreover, the fiber $\pi_1^{-1}(x_1)$ is $\bbB_C^{n-1}$ and we can write $C(x)=R_{n-1}/I$ contradicting the minimality of $n$. 
		This implies that the residue field $k(x)$ of $C(x)$ is a transcendental extension of $k$. 
		
		Let $f:R_n\to C(x)$ denote the quotient map, let $R'_n=f^{-1}(O_{C(x)})\subseteq R_n$, and let $A_n\subseteq k(x)$ the image of $R'_n$ in $k(x)$. 
		We claim that $A_n$ is a proper subring of $k(x)$, which implies that $f:R_n\to C(x)$ is not surjective. 
		This gives the contradiction. 
		
		By \Cref{embeddonHahn} we may embed $C(x)$ into a Hahn field ${\frakC(x)}$. 
		We may interpret $x\in \bbB^n_C$ as a map $f:R_n\to \frakC(x)$ with $f(R_n)=C(x)$. 
		One can describe $\frakC(x)$ explicitly, it is of the form $\overline{k(x)}\rpot{\bbR}$, and maps $R_n\to \frakC(x)$ are determined by the choice of a tuple $(c_1,\dots,c_n)\in O^n_{\frakC(x)}$. 
		We may interpret each element $c_i$ as a pair $=(\calP_{c_i},g_{c_i})$ where $\calP_{c_i}\subseteq \bbR$ is a well-ordered subset and $g_{c_i}:\bbR\to \overline{k(x)}^\times$ is a function whose support is $\calP_{c_i}$. 
		Similarly, for $q\in R_n$ we may interpret $f(q)$ as a tuple $(\calP_q,g_q)$, and $q\in R'_n$ if and only if the smallest element of $\calP_q$ is larger or equal to $0$.
		Moreover, if $q\in R'_n$ its image in $A_n$ is $g_q(0)$.

		Let $S\subseteq \overline{k(x)}$ denote the set of elements of the form $s=g_{c_i}(r_i)$ where $r_i\in \calP_{c_i}$ for some $i\in \{1,\dots,n\}$.
		It is not hard to see that 
		\[A_n\subseteq k[S^{\frac{1}{p^\infty}}]\cap k(x)\subseteq \overline{k(x)}.\] 
		Furthermore, note that $S$ is countable. 
		Indeed, for $i\in \{1,\dots,n\}$ the set $\calP_i\subseteq \bbR$ is countable since it is a well-ordered subset of the real numbers. 
		This readily implies that $k[S^{\frac{1}{p^\infty}}]$ has a countable basis as a $k$-vector space, which implies the same of $A_n$.

		We finish by observing that $k(x)$ has uncountable dimension over $k$.
	Indeed, if $t\in k(x)$ is a transcendental element, then the set $\{\frac{1}{t-c}\mid c\in k\}$ is linearly independent over $k$. 
	By our assumption \ref{mainconduncount} this set is uncountable.
	\end{proof}
	In the case of one variable, the perfectoid Nullstellensatz holds more generally.
	\begin{proposition}
		The pair $(C,R_1)$ is a Nullstellensatz pair if and only if $C$ is algebraically closed.		
	\end{proposition}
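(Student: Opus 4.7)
The plan is to reduce to characteristic $p$ via \Cref{workcharp}, so that $R_1$ represents $\bbB^1_C$ and one may appeal to the Berkovich classification of points in the unit ball \cite[\S 1.4.4]{Ber90}. Both implications will then hinge on the fact that, among Berkovich points of $\bbB^1_C$, only those of Type I give rise to a maximal support ideal in $R_1$.

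For the ``if'' direction, suppose $C$ is algebraically closed and let $I \subseteq R_1$ be a maximal ideal. By \cite[Theorem 5.8]{Sch17} the quotient $R_1/I$ is a perfectoid field over $C$, so its rank-one valuation pulls back to a continuous valuation on $R_1$, i.e. to a point $x \in \bbB^1_C$ whose support ideal is $I$. For $x$ of Type II, III, or IV the valuation $v_x$ is (a limit of) Gauss-type valuations of the form $v_x(\sum_i a_i x^{i/p^n}) = \max_i |a_i| r^{i/p^n}$, which is strictly positive on every nonzero element of $R_1$; hence $\on{supp}(v_x) = (0)$, contradicting the maximality of $I$ since $R_1$ is not a field. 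Thus $x$ is of Type I, corresponding to a point $a \in O_C = \bbB^1_C(C)$, and $I$ is the evaluation ideal at $a$.

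For the converse, suppose $C$ is not algebraically closed and pick $\alpha \in \overline{C} \setminus C$ with $|\alpha| \leq 1$ (rescaling by an element of $C^\times$ if needed). Since $C$ is perfectoid of characteristic $p$, it is perfect, and if $m(x) \in C[x]$ is the minimal polynomial of $\alpha$, then writing its coefficients as $p^n$-th powers produces a polynomial $\tilde m_n(x) \in C[x]$ of the same degree with $\tilde m_n(x)^{p^n} = m(x^{p^n})$. A short degree argument shows that $\tilde m_n$ is the minimal polynomial of $\alpha^{1/p^n}$, forcing $C(\alpha^{1/p^n}) = C(\alpha)$ for every $n$. Consequently the evaluation map
\[
\on{ev}_\alpha : R_1 \longrightarrow C(\alpha), \qquad x^{1/p^n} \longmapsto \alpha^{1/p^n},
\]
is a well-defined continuous surjection onto a finite proper extension of $C$, and its kernel is a maximal ideal whose quotient is not $C$.

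The step I expect to be the main technical obstacle is verifying that non-Type-I Berkovich valuations really have zero support in the perfectoid algebra $R_1$, rather than only in the Tate algebra $T_1$. A priori, $p$-power identities among the $x^{1/p^n}$ could introduce cancellations unavailable in $T_1$. The remedy is that $\Spa T_1$ and $\Spa R_1$ coincide as topological spaces (as already emphasized in the paper), so the Gauss-type formula for $v_x$ extends by continuity from $T_1$ to $R_1$, preserving the key dichotomy between zero support and maximal support on which the argument depends.
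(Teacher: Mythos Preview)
Your argument is correct in outline but follows a genuinely different route than the paper's. The paper does not attach a Berkovich point directly to the maximal ideal $I$. Instead it picks a nonzero $f\in I$, shows that the Zariski closed locus $Z_f$ is quasi-pro-\'etale over $\Spa(C)$ (after passing to a v-cover making $C$ spherically complete with $|C^\times|=\bbR^+$, thereby eliminating Type~III and IV points so that only the Type~II case must be ruled out by an explicit estimate), deduces $Z_f\cong\underline{\pi_0(Z_f)}\times\Spa(C)$ via a Berkovich-path connectedness argument, and concludes that $R_1/I$ factors through a connected component of $Z_f$, which has residue field $C$. Your approach is more direct: classify the possible supports of points of $|\bbB^1_C|$ and observe that only Type~I points can have maximal support. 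The paper's v-cover trick buys it the luxury of treating only Type~II; you treat all types at once.

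Two places in your write-up deserve tightening. First, the appeal to \cite[Theorem~5.8]{Sch17} does not by itself yield a rank-one valuation on $R_1/I$: that theorem describes $V(I)$ as affinoid perfectoid with surjective structure map, but does not exclude $V(I)=\emptyset$. The clean fix is to note that $I$ is closed (any maximal ideal in a complete Tate ring is, since an element at distance $<1$ from $1$ is a unit), so $R_1/I$ is a nonzero complete Tate ring and $\Spa(R_1/I,(R_1/I)^+)\neq\emptyset$ by Huber; any such point pulls back to a point of $\bbB^1_C$ with support exactly $I$. Second, the slogan ``$|\Spa T_1|=|\Spa R_1|$, so the Gauss formula extends by continuity'' is not quite enough, particularly for Type~IV: a limit of strictly positive Gauss norms could in principle vanish on some $f\in R_1$. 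The honest argument is that $\operatorname{supp}(v_x)$ is a \emph{closed} ideal of $R_1$; if nonzero, density of $\bigcup_m C\langle x^{1/p^m}\rangle$ in $R_1$ forces it to meet some $C\langle x^{1/p^m}\rangle\cong T_1$ nontrivially, and then the classical fact (via Weierstrass preparation) that only Type~I points of $\bbB^1_C$ have nonzero support in $T_1$ gives the contradiction. With these two clarifications your proof is complete, and arguably more transparent than the paper's.
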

	\begin{proof}
		Necessity is evident. 
		Suppose $C$ is algebraically closed and of characteristic $p$. 
		We use the classification of points in the unit ball $\bbB^1_C$. 
		Take $I\subseteq C\langle x^{{1}/_{p^\infty}}\rangle$ a maximal ideal and let $f\in I$ be a non-zero element. 
		Let $Z_f$ denote the zero locus of $f$, we claim that $Z_f=\underline{S}\times \Spa(C)$ for a profinite set $S$. 
		Equivalently, we claim that $Z_f\to \Spa(C)$ is quasi-pro-\'etale.
		By \cite[Proposition 10.11.(v)]{Sch17}, it suffices to prove this after a v-cover of $\Spa(C)$, so we may enlarge $C$ to assume that $|C^\times|=\bbR^+$ and that $C$ is spherically complete.
		
		Let $z\in \bbB_C^1$, we prove that $z\notin Z_f$ whenever $z$ is of Type II. 
		By changing coordinates we may always assume that $z$ corresponds to the ball of radius $r\leq 1$ centered at the origin. 
		Recall that $C\langle x^{{1}/_{p^\infty}}\rangle$ is the completed colimit of the system of rings:
		\begin{equation}
			C\langle x\rangle \xrightarrow{x \mapsto x^p} 	C\langle x\rangle \xrightarrow{x \mapsto x^p} \dots	
		\end{equation}
		Write $f$ in the form 
\begin{equation}
		f=\sum_{m\in \bbZ[\frac{1}{p}]} a_{m}x^{m}
	\end{equation} with $a_{m}\in C$ such that for all $\varepsilon \in \bbR^+$ only finitely many terms in the sum have value $|a_m|>\varepsilon$.
		Choose a monomial of the form $a_{M}x^{M}$ appearing in $f$, and let $\epsilon=|a_{M}|\cdot r^{M}$.
		We can approximate $f$ by the finite sum of terms $f_\epsilon=\sum \limits_{m\in S_\epsilon} a_{m}x^{m}$ with $S_\epsilon:= \{m\in \bbZ[\frac{1}{p}] \mid |a_m|\geq \epsilon\}$. 
		By construction, $|f-f_\epsilon|_z<\epsilon$.
		Moreover, since $a_M x^M$ shows up as a monomial of $f_\epsilon$ we also know that $|f_\epsilon|_z=\on{sup} \limits_{m\in S_\epsilon}|a_{m}|r^{m}\geq |a_M|r^M= \epsilon$ from which it follows that $|f|_z\geq \epsilon$. 

		Now that we know $Z_f$ consists only of Type I points, we claim that the natural map $Z_f\to \underline{\pi_0(Z_f)}\times \Spa(C)$, is an isomorphism. Since both spaces are proper over $\Spa(C)$, by \cite[Lemma 11.11]{Sch17} it suffices to prove that every component of $Z_f$ consists of one point. But if two Type I points lie in the same connected component of $Z_f$, the whole Berkovich path between them in $\bbB^1_C$ would also lie in $Z_f$. 
		This contradicts the fact that $Z_f$ does not contain Type II points. 
		From the above it follows easily that $(C,R_n)$ is a Nullsellensatz pair. Indeed, the map $\Spa R_n/I \to Z_f\to \Spa C$ must factor through a connected component of $y\in \pi_0(Z_f)$, which has residue field $C$. 
	\end{proof}

	\section{The Counterexample}
	We now discuss the proof of \Cref{notype3point}. 
	In other words, we give a counterexample to perfectoid Nullstellensatz for every field $C$ such that $|C^\times|\neq \bbR^+$. 
	The counterexample relies on the explicit description of the residue field of a Type III point in the perfectoid unit ball, \Cref{ExplicitTypeIII}.
	\begin{theorem}
		\label{mainnotype3point}
		Suppose that $n\geq 2$ and that $|C^\times|\subsetneq \bbR^+$ is a proper subset. Then $(C,R_n)$ is \textbf{not} a Nullstellensatz pair. 
	\end{theorem}
	\begin{proof}
		Without losing generality we may assume that $C$ is algebraically closed and by \Cref{workcharp} that $C$ is of equal characteristic $p$.
		Assume that $n\geq 3$, and that $|C^\times|\neq \bbR^+$.
		With this setup, we construct a non-trivial field extension $C(y)$ over $C$ and a surjective map $f_y:R_3\to C(y)$.
		The kernel of $f_y$ is a maximal ideal that is not an evaluation ideal.
		Since for every $n\geq 3$ we have a surjective map $R_n\to R_3$, this proves that $(C,R_n)$ is not a Nullstellensatz pair whenever $n\geq 3$. 
		We deduce the case $n=2$ from the case $n=3$ in \Cref{casen=2} below.

		Let $r\in \bbR^+\setminus |C^\times|$ with $r<1$, and fix a pseudouniformizer $\varpi\in C$. 
		We can consider the unit ball of radius $r$ centered at the origin and this gives rise to a Type III point $y\in \bbB^1_C$. 
		We let $C(y)$ denote the (perfected) residue field at this point. 
		By \Cref{ExplicitTypeIII}, we can describe this field explicitly.	
		It consists of power series expressions 
		\begin{equation}
			\beta=\sum \limits_{q\in \bbZ[\frac{1}{p}]} b_q x^q
		\end{equation}
		where $b_{q}\in C$ and subject to the constraint that for any $\varepsilon >0$ the set $\{q\in \bbZ[\frac{1}{p}]\mid |b_{q}|r^{q}>\varepsilon\}$ is finite.  
		Addition and multiplications are given by the evident formulas and the value $|\beta|_y$ is given by $\sup \limits_{q\in \bbZ[\frac{1}{p}]} |b_{q}|r^{q}$. 
		Moreover, the supremum is always a maximum and it is uniquely attained. 
		We call this term the \textit{leading monomial}. 
	
	We divide the proof in two steps. The first step, corresponds to \Cref{lema-adapted}, it gives a criterion for when a map $R_3\to C(y)$ is surjective. The second step, corresponding to \Cref{lema-construction}, gives an explicit construction that satisfies the criterion of \Cref{lema-adapted}. 

	\begin{definition}
		Let $0<s<1$ and let $q\in \bbZ[\frac{1}{p}]$. We say that an element $\beta\in C(y)$ is $(q,s)$-adapted if the following conditions hold:  	
		\begin{enumerate}
			\item $s<|\beta|_y\leq 1$.
			\item If $\beta=\sum \limits_{j\in \bbZ[\frac{1}{p}]} b_j x^j$ then $|b_q|r^q=|\beta|_y$. In other words, $b_qx^q$ is the leading monomial.
			\item $|\beta-b_qx^q|_y\leq s\cdot |\varpi|$.
		\end{enumerate}
	\end{definition}

		\begin{lemma}
			\label{lema-adapted}
			Let $f:R_3\to C(y)$ be a continuous map. Suppose there exists $0<s<1$ such that for any $q\in \bbZ[\frac{1}{p}]$, there is an element $\alpha\in O_C\langle x^{{1}/_{p^\infty}}_1,x^{{1}/_{p^\infty}}_2,x^{{1}/_{p^\infty}}_3\rangle$ such that $f(\alpha)$ is $(q,s)$-adapted. Then $f$ is surjective.  
		\end{lemma}

	\begin{proof}
		Given $\beta\in C(y)$ we construct an element $\alpha_\infty\in C\langle x^{{1}/_{p^\infty}}_1,x^{{1}/_{p^\infty}}_2,x^{{1}/_{p^\infty}}_3\rangle$ mapping to $\beta$.  		
		Replacing $\beta$ by $\varpi^k \beta$ for some $k$, we may assume $|\beta|_y\leq s$. We can write 
		\begin{equation}
		\beta=\sum \limits_{q\in \bbZ[\frac{1}{p}]} b_{q} x^{q}. 
		\end{equation}

		We construct recursively elements $\alpha_m\in R_3$ and $\beta_m\in C(y)$ for which we prove inductively that: 
		\begin{itemize}
			\item $|\beta_m|_y\leq |\varpi^m|\cdot s$.
			\item  $|\alpha_{m+1}-\alpha_m|_y\leq |\varpi^m|$.
			\item $f(\alpha_m)=\beta -\beta_m$.
		\end{itemize}
		In particular, the sequence $\alpha_m$ is a Cauchy sequence in $C\langle x^{{1}/_{p^\infty}}_1,x^{{1}/_{p^\infty}}_2,x^{{1}/_{p^\infty}}_3\rangle$ and letting $\alpha_\infty$ be the limit of the $\alpha_m$ exhibits an element with $f(\alpha_\infty)=\beta-\beta_\infty=\beta$. 
		One should think of this recursion as a division algorithm and it is done as follows: 

			 We let $\beta_0=\beta$ and $\alpha_0=0$, and by our hypothesis above $|\beta_0|_y\leq |\varpi^0|\cdot s$.
			 If $\beta_m=\sum \limits_{q\in \bbZ[\frac{1}{p}]} b_{q,m} x^{q}$, we let $\{\beta_m\}=\sum \limits_{q\in S_m\subseteq \bbZ[\frac{1}{p}]} b_{q,m} x^{q}$ where: 
			 \[S_m=\{q\in \bbZ[\frac{1}{p}]\mid |\varpi^{m+1}|\cdot  s \leq |b_{q,m} x^{q}|_y \leq |\varpi^{m}|\cdot s\}.\]
			 Since $\beta_m\in C(y)$, $S_m$ is finite.
			 For each $q\in S_m$ we choose an element $e_{q,m}\in O_C\langle x^{{1}/_{p^\infty}}_1,x^{{1}/_{p^\infty}}_2,x^{{1}/_{p^\infty}}_3\rangle$ mapping to an element $f(e_{q,m})$ that is $(q,s)$-adapted. Moreover, we let $c_{q,m}x^q\in C(y)$ denote the leading monomial of $f(e_{q,m})$.   
				
			 Now, by construction for all $q\in S_m$ $|\varpi^{m+1}|\cdot s \leq |b_{q,m} x^{q}|_y\leq |\varpi^m|\cdot s$ and since $f(e_{q,m})$ is $(q,s)$-adapted $|\frac{b_{q,m}x^q}{\varpi^m}|_y< |c_{q,m}x^q|_y$. 
				We may find an element $d_{q,m}\in O_C$ with $|d_{q,m}|< 1$ and such that $b_{q,m}=c_{q,m} d_{q,m} \varpi^m$. 
				We let $e_m=\sum \limits_{q\in S_m} d_{q,m} \varpi^m e_{q,m}$. 

				Notice that by construction, and since $f(e_{q,m})$ is $(q,s)$-adapted, $|f(e_m)-\{\beta_m\}|_y\leq s\cdot  |\varpi^{m+1}|$.  
			 We let $\alpha_{m+1}=\alpha_m+e_m$ and we let $\beta_{m+1}=\beta_m - f(e_m)$. 
			 Note that $|\beta_{m+1}|_y$ is bounded by the maximum of $|\beta_m-\{\beta_m\}|_y$ and $|\{\beta_m\}-f(e_m)|_y$. In particular, $|\beta_{m+1}|_y\leq |\varpi^{m+1}|\cdot s$.
			 It is clear from the definition of $\alpha_{m+1}$ that $|\alpha_{m+1}-\alpha_m|_y\leq|\varpi^m|$ since $|e_m|\leq |\varpi^m|$ holds by construction.
			 Now, $f(\alpha_{m+1})=f(\alpha_m)+f(e_m)=\beta-\beta_m+f(e_m)=\beta-\beta_{m+1}$, by the definition of $\beta_{m+1}$.
	\end{proof}

	We now pass to the second part of the proof. Namely, we construct a map $R_3\to C(y)$ satisfying the hypothesis of \Cref{lema-adapted}. 
	\begin{proposition}
		\label{lema-construction}	
		For any $0<s<1$, there is a continuous map $f:R_3\to C(y)$ satisfying that for any $q\in \bbZ[\frac{1}{p}]$ there is an element $a_q\in  O_C\langle x^{{1}/_{p^\infty}}_1,x^{{1}/_{p^\infty}}_2,x^{{1}/_{p^\infty}}_3\rangle$ such that $f(a_q)$ is $(q,s)$-adapted. 
	\end{proposition}
	\begin{proof}
		We fix $0<s<1$ and construct the map. Since we are working in characteristic $p$ it suffices to specify the images of $x_1$, $x_2$ and $x_3$.
	We let $x_1 \mapsto x$ and we let $x_2 \mapsto c x^{-1}$ for some element $c\in O_C$ so that $s<|c x^{-1}|_y<1$. 
	The construction of the image of $x_3$ is more elaborate.

	Choose a bijection $\omega:\bbN \to \bbZ[\frac{1}{p}]$, we use $\omega$ to well-order the elements of $\bbZ[\frac{1}{p}]$. 
	We let $x_3\mapsto \alpha$ with $\alpha\in C(y)$ constructed as a sum $\sum \limits_{m=1}^\infty \alpha_m$ given by:
	\begin{equation}
		\alpha= \sum \limits_{m=1}^\infty \alpha_m= \sum \limits_{m=1}^\infty [e_m x^{\omega(m)}]^{p^{b_m}}
	\end{equation}
Here $\alpha_m=[e_m x^{\omega(m)}]^{p^{b_m}}$, where we fix $e_m\in C$ so that $s<|e_m x^{\omega(m)}|_y< 1$ and then we construct $b_m\in \bbN$ recursively.

	We let $b_1=0$ and for $m>1$ we choose $b_m$ depending on $\{b_1,\dots,b_{m-1}\}$ and sufficiently large so that $b_m$ satisfies the following constraints: 
	\begin{enumerate}
		\item We require that $b_m$ be large enough so that $|\alpha_m|_y<|\varpi|^m$.
		\item Choose $\epsilon_m\in O_C\setminus \{0\}$, with the property that for all $1\leq j< m$, \label{constassumption2}
			\begin{align}
				|\epsilon_m \alpha_j|_y\leq |x^{\omega(j)\cdot p^{b_j}}|_y & &  when & & 0<\omega(j) \\
				|\epsilon_m \alpha_j|_y\leq |(c x^{-1})^{-\omega(j)p^{b_j}}|_y & & when & & \omega(j)<0. 
			\end{align}
			With $\epsilon_m$ chosen in this way we require that $b_m$ be large enough so that $s<|\epsilon_m^{p^{-b_m}} e_m x^{\omega(m)}|_y<1$.
		\item For all $1\leq j<m$, we require $b_m-b_j$ to be large enough so that $|(e_{m} x^{\omega(m)})^{p^{b_m-b_j}}|_y< |\varpi|\cdot s$. \label{constassumption3}
	\end{enumerate}
The first condition ensures that $\alpha$ is a well defined element in $C(y)$. 
We will exploit the second and third condition to construct for all $q\in \bbZ[\frac{1}{p}]$ a $(q,s)$-adapted element in the image of $O_C\langle x^{{1}/_{p^\infty}}_1,x^{{1}/_{p^\infty}}_2,x^{{1}/_{p^\infty}}_3\rangle$. 
 
Let $W_{\omega(j)}=x_1^{\omega(j)p^{b_{j}}}$ when $\omega(j)\geq 0$ and $W_{\omega(j)}=x_2^{-\omega(j)\cdot p^{b_{j}}}$ if $\omega(j)<0$. 
By the requirement in \ref{constassumption2}, for $1\leq j\leq m-1$ the $j$th-term of $\epsilon_m\cdot f(x_3)$ is divisible by $f(W_{\omega(j)})$, let $d_{\omega(j)}\in O_C$ denote $\frac{\epsilon_m\cdot \alpha_j}{f(W_{\omega(j)})}$. 
Let 
\begin{equation}
	a_{\omega(m)}^{p^{b_m}}=  \epsilon_m x_3 - \sum \limits_{i=1}^{m-1} d_{\omega(i)} W_{\omega(i)},
\end{equation}
we claim that $a_{\omega(m)}$ which by construction lies in $O_C\langle x^{{1}/_{p^\infty}}_1,x^{{1}/_{p^\infty}}_2,x^{{1}/_{p^\infty}}_3\rangle$, satisfies that $f(a_{\omega(m)})$ is $(\omega(m),s)$-adapted.
Indeed, $f(a_{\omega(m)})^{p^{b_m}}=\epsilon_m\cdot \sum \limits_{j=m}^\infty \alpha_m$, and the first term in the sum expression of $f(a_{\omega(m)})$ is $\epsilon_m^{p^{-b_m}}\cdot e_{m} x^{\omega(m)}$, with value $s<|\epsilon_m^{p^{-b_m}}\cdot e_{m} x^{\omega(m)}|_y<1$ by \cref{constassumption2}. 
The next terms have the form $\epsilon_m^{p^{-b_m}}\cdot (e_{k}x^{\omega(k)})^{p^{b_{k}-b_m}}$ with value $|\epsilon_m^{p^{-b_m}}\cdot (e_{k}x^{\omega(k)})^{p^{b_{k}-b_m}}|_y<|\varpi|\cdot s$ by \cref{constassumption3}. 
This finishes the proof that $f(a_{\omega(m)})$ is $(\omega(m),s)$-adapted.
	\end{proof}

	\end{proof}

	\begin{proposition}
		\label{casen=2}
		If $|C^\times|\neq \bbR^+$, then $(C,R_2)$ is not a Nullstellensatz pair.
	\end{proposition}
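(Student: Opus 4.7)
The plan is to adapt the surjection $R_3 \twoheadrightarrow C(y)$ constructed in the proof of \Cref{mainnotype3point} to a surjection $R_2 \twoheadrightarrow C(y)$ onto the same Type III residue field $C(y) \neq C$, thereby producing a non-evaluation maximal ideal of $R_2$. The idea is to fold two of the three assignments of the $n=3$ case into a single variable: set $f(x_1) = x$ and $f(x_2) = cx^{-1} + \alpha$, where $c$ is as in \Cref{lema-construction} and $\alpha \in C(y)$ is a transcendental element constructed by a recursion of the same shape. We impose the extra requirement $|\alpha|_y < |c|/r$, so that $cx^{-1}$ remains the leading monomial of $f(x_2)$; thanks to this, in characteristic $p$ we have the clean identity $f(x_2)^{p^N} = c^{p^N} x^{-p^N} + \alpha^{p^N}$, and the single variable $x_2$ encodes both the ``annulus generator'' $cx^{-1}$ and the transcendental perturbation $\alpha$ simultaneously.

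Surjectivity of $f$ will be established via the $(q,s)$-adapted element machinery. The statement and proof of \Cref{lema-adapted} are independent of the number of variables, so it suffices to produce, for every $q \in \bbZ[\tfrac{1}{p}]$, an element of $O_C\langle x_1^{{1}/_{p^\infty}}, x_2^{{1}/_{p^\infty}}\rangle$ whose image under $f$ is $(q,s)$-adapted. I would mimic \Cref{lema-construction}: writing $\alpha = \sum_{m=1}^{\infty}(e_m x^{\omega(m)})^{p^{b_m}}$ with $\omega$ a bijection $\bbN \to \bbZ[\tfrac{1}{p}]$, I would set $a_{\omega(m)}^{p^{b_m}} = \epsilon_m x_2 - \sum_{i<m} d_{g(i)} W'_{\omega(i)}$, where $W'_{\omega(i)}$ equals $x_1^{\omega(i)p^{b_i}}$ for $\omega(i) > 0$ and is built from $x_2^{-\omega(i)p^{b_i}}$ for $\omega(i) < 0$, so that $f(W'_{\omega(i)})$ equals the pure monomial $c^{-\omega(i)p^{b_i}} x^{\omega(i)p^{b_i}}$ up to an $\alpha$-perturbation coming from the non-leading summand of $f(x_2)^{-\omega(i)p^{b_i}}$.

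The main obstacle will be bookkeeping: in the $n=3$ setting each $f(W_{\omega(i)})$ was an exact monomial, so the subtraction $\sum d_{g(i)} W_{\omega(i)}$ cleanly cancelled the first $m-1$ terms of $\epsilon_m\alpha$; here $f(W'_{\omega(i)})$ for $\omega(i)<0$ carries parasitic $\alpha$-contributions of the form $\alpha^{-\omega(i)p^{b_i}}$ that threaten to spoil the leading monomial of the candidate $a_{\omega(m)}$. The resolution is to add a fourth ``sufficiently large'' condition to the recursive choice of $b_m$, requiring that all accumulated parasitic contributions from the $W'_{\omega(j)}$ with $j<m$ have value strictly below $s|\varpi|$; this is achievable because $|\alpha|_y < |c|/r$ forces these parasitic terms to be much smaller than the corresponding leading terms, with ratio tending to zero superexponentially in $b_m$. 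Since each of the four conditions is of the form ``$b_m$ sufficiently large'', they can be imposed simultaneously in the recursion, and the remainder of the argument of \Cref{lema-construction} carries over without essential change. This produces a $(q,s)$-adapted element for every $q \in \bbZ[\tfrac{1}{p}]$, surjectivity of $f$ follows, and the proposition is proved.
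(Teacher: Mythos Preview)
Your approach diverges substantially from the paper's and contains a real gap. You set $f(x_2)=cx^{-1}+\alpha$ and then form $a_{\omega(m)}^{p^{b_m}}=\epsilon_m x_2-\sum_{i<m}d_{g(i)}W'_{\omega(i)}$, where the $W'_{\omega(i)}$ are designed to cancel the first $m-1$ summands $\epsilon_m\alpha_i$ of $\epsilon_m\alpha$. But $f(\epsilon_m x_2)=\epsilon_m cx^{-1}+\epsilon_m\alpha$, and the annulus term $\epsilon_m cx^{-1}$ strictly dominates every $\epsilon_m\alpha_j$ by your own hypothesis $|\alpha|_y<|c|/r$. None of your correction terms touches it: for $\omega(i)<0$ the leading monomial of $f(W'_{\omega(i)})$ sits in degree $\omega(i)p^{b_i}$, and the only way to hit degree $-1$ would be to subtract a multiple of $x_2$ itself, which is circular. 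Consequently the leading monomial of $f(a_{\omega(m)})$ is $(\epsilon_m c)^{1/p^{b_m}}x^{-1/p^{b_m}}$, not a multiple of $x^{\omega(m)}$, and $f(a_{\omega(m)})$ is never $(\omega(m),s)$-adapted. Your proposed fourth ``$b_m$ sufficiently large'' condition cannot repair this: the dominance of the annulus term over $\alpha_m$ is independent of $b_m$, and taking a $p^{b_m}$-th root only pushes values \emph{closer} to $1$. (Starting instead from $x_1x_2-c$, whose image $x\alpha$ has no annulus term, trades this problem for an exponent shift of $p^{-b_m}$ after the root, which again does not yield $\omega(m)$ without further work.)

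The paper proceeds quite differently and avoids any two-variable recursion. It notes that the $n=3$ surjection already factors through $S_c:=R_3/(x_1x_2-c)$, identifies $\Spa S_c$ with $U\times\bbB^1_C$ for an open annulus $U\subseteq\bbG_{m,C}$, and then invokes a finite \'etale morphism $\Spa S_c\to\bbB^2_C$ to obtain a map $R_2\to S_c\to C(y)$. Crucially the paper does \emph{not} claim this composite is surjective: its image $K$ is only a subfield over which $C(y)$ is finite, hence $K\neq C$, which suffices. This geometric reduction sidesteps all the bookkeeping you are attempting.
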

	\begin{proof}
		Let $I_c\subseteq R_3$ denote the ideal generated by $x_1\cdot x_2-c$, and let $S_c=R_3/I_c$. 
		By \Cref{lema-adapted} and the explicit construction of \Cref{lema-construction} we know that for $c$ chosen so that $s<|c\cdot x^{-1}|_y<1$ the pair $(C,S_c)$ is not a Nullstellensatz pair since there is a surjection $S_c\to C(y)$.
		We observe that $\Spa S_c=U\times \bbB_C^1$ for $U\subseteq \bbG_{m,C}$ an open subset of the multiplicative group.
		Indeed, the variable $x_3$ does not interact with the other variables and the variety cut by the equation $x_1x_2=c$ can be parametrized by the map $t\mapsto (t,c\cdot t^{-1})$, $U$ corresponds to the locus where $|t|\leq 1$ and $|c\cdot t^{-1}|\leq 1$.

		We can now regard $\Spa S_c=U\times \bbB^1$ as an open subset of $\bbB^2_C$ and by \cite[Corollary D.5]{Zav21} we may find a finite \'etale morphism $\Spa S_c\to \bbB^2_C$.
		This gives a map $R_2\to C(y)$, and we let $K$ denote the image of this map. 
		Since $S_c\to C(y)$ is surjective $C(y)$ is a finite $K$-module. 
		This implies that $K$ is a field.
		We have constructed a surjective map $R_2\to K$ and $K\neq C$ since $C(y)$ is a finite extension of $K$ but not of $C$.
	\end{proof}

	In the proof of \Cref{mainnotype3point} we used the following lemma. 

	\begin{lemma}
		\label{ExplicitTypeIII}
		Let $F$ be a complete non-Archimedean field and let $y\in \bbB^1_F$ be a Type III point corresponding to the ball of radius $r$ and center $0$ for some $r<1$ and $r\in \bbR^+\setminus |F^\times|^\bbQ$. 
		The following hold:
		\begin{enumerate}
			\item Let $F(y)$ denote the residue field at $y$. Then $F(y)$ is isomorphic to the ring of $r$-convergent power series expressions: 
				\[\sum_{i\in \bbZ} a_i x^i\]
				where $a_i\in F$ and $\lim \limits_{|i|\to \infty} |a_i|\cdot r^i=0$.
			\item Suppose that $F$ is perfect of characteristic $p$ and let $F(y)^{\on{perf}}$ be the perfected residue field at $y$. Then $F(y)^{\on{perf}}$ is isomorphic to the ring of perfected $r$-convergent power series expressions: 
				\[\sum_{i\in \bbZ[\frac{1}{p}]} a_i x^i\]
				where $a_i\in F$ and for all $\varepsilon>0$ the set $\{i\in \bbZ[\frac{1}{p}]\mid |a_i|\cdot r^i\geq \varepsilon\}$ is finite. 
		\end{enumerate}
	\end{lemma}
		\begin{proof}
			Let $F\langle x\rangle_r$ and $F\langle x^{\frac{1}{p^\infty}}\rangle_r$ denote the rings of $r$-convergent power series and perfected $r$-convergent power series respectively.
			Observe that since $r\notin |F^\times|^\bbQ$ the rings $F\langle x\rangle_r$ and $F\langle x^{\frac{1}{p^\infty}}\rangle_r$ are fields. 
			Indeed, let $\Gamma\in \{\bbZ,\bbZ[\frac{1}{p}]\}$ if $f=\sum_{i\in \Gamma} a_i x^i$ the set $\{|a_i|r^i\}_{i\in \Gamma}$ attains a unique maximum, say $|a_{N}|\cdot r^N$. 
			Then $n=(a_N^{-1} x^{-N} f-1)$ is a topologically nilpotent element which implies that $a_N^{-1}x^{-N}f=1+n$ is invertible.

			Since $r<1$ we have tautological inclusions $T_1=F\langle x\rangle_1\to F\langle x\rangle_r$ and $R_1=F\langle x^{\frac{1}{p^\infty}}\rangle_1\to F\langle x^{\frac{1}{p^\infty}}\rangle_r$ determined by $x\mapsto x$. 
			Moreover, the valuation induced by the valued fields $F\langle x\rangle_r$ and $F\langle x^{\frac{1}{p^\infty}}\rangle_r$ is precisely the one induced by $y$. 
			Thus we get factorizations $T_1\to F(y)\to F\langle x\rangle_r$ and $T_1\to F(y)^{\on{perf}}\to F\langle x^{\frac{1}{p^\infty}}\rangle_r$.
			It suffices to show that $F(y)\subseteq F\langle x\rangle_r$ and $F(y)^{\on{perf}}\to F\langle x^{\frac{1}{p^\infty}}\rangle_r$ are dense, but this is the case since $T_1$ and $R_1$ are already dense in $F\langle x\rangle_r$ and $F\langle x^{\frac{1}{p^\infty}}\rangle_r$ respectively. 
		\end{proof}

	\bibliography{biblio.bib}
	\bibliographystyle{alpha}
	
\end{document}